\theoremstyle{plain}
\newtheorem{theorem}{Theorem}[section]
\newtheorem{lemma}[theorem]{Lemma}
\newtheorem{definition}[theorem]{Definition}
\newtheorem{proposition}[theorem]{Proposition}
\newtheorem{conjecture}[theorem]{Conjecture}
\newtheorem{remark}[theorem]{Remark}
\numberwithin{equation}{section}
\newcommand{\mc}{\mathcal}
\newcommand{\M}{\mathcal{M}}
\newcommand{\Q}{\mathcal{Q}}
\newcommand{\orb}{\mathcal{O}}
\def\cA{\mathcal A}
\def\cC{\mathcal C}
\def\cL{\mathcal L}
\def\cM{\mathcal M}
\def\cO{\mathcal O}
\def\cQ{\mathcal Q}
\def\cS{\mathcal S}
\newcommand{\tr}{\mathrm{Tr}}
\newcommand{\Tr}{\mathrm{Tr}}
\newcommand{\PG}{\mathrm{PG}}
\newcommand{\GL}{\mathrm{GL}}
\newcommand{\PGL}{\mathrm{PGL}}
\newcommand\F{\mathbb{F}}
\def\w{\omega}
\def\<{\langle}
\def\>{\rangle}
\title{An infinite family of $m$-ovoids of the hyperbolic quadrics $\Q^+(7,q)$}
\author[Pavese, Zou]{Francesco Pavese, Hanlin Zou}
\address{Francesco Pavese, Department of Mechanics, Mathematics and Management, Polytechnic University of Bari, Via Orabona 4, 70125 Bari, Italy; }
\email{francesco.pavese@poliba.it;}
\address{Hanlin Zou, School of Mathematical Sciences, Zhejiang University, Hangzhou 310058, China}
\email{zouhanlin@zju.edu.cn}
\keywords{Polar space, $m$-ovoid, generalized hexagon, intriguing set.\\
\indent 2020 Mathematics Subject Classification: 51A50, 05B25, 51E12, 51E20}
\begin{document}
\maketitle
\begin{abstract}
An infinite family of $(q^2+q+1)$-ovoids of $\Q^+(7,q)$, $q\equiv 1\pmod{3}$, admitting the group $\PGL(3,q)$, is constructed. The main tool is the general theory of generalized hexagons.
\end{abstract}

\section{Introduction}\label{sec1}

Let $q$ be a prime power, $\F_q$ the finite field of order $q$, and $V$ a finite dimensional vector space over $\F_q$. Let $f$ be a non-degenerate reflexive sesquilinear form or a non-singular quadratic form on $V$. The finite classical polar space $\mc{S}$ associated with $(V, f)$ is the geometry consisting of the totally singular or totally isotropic subspaces with respect to $f$ of the ambient projective space $\PG(V)$, according to whether $f$ is a quadratic or sesquilinear form. 
The totally singular or totally isotropic one-dimensional subspaces are the {\it points} of $\mc{S}$, and the collection of all the points of $\mc{S}$ will be denoted by $\mc{P}$. The totally singular or totally isotropic subspaces of maximum dimension are called the {\it generators} of $\mc{S}$.
The {\it rank} of $\mc{S}$ is the vector space dimension of its generators. A finite classical polar space of rank 2 is a point-line geometry, and is also called a {\it finite generalized quadrangle}. Polar spaces over finite fields are very interesting geometric structures because they possess large automorphism groups, namely the finite classical groups. In this context it is natural to investigate combinatorial objects embedded in polar spaces admitting a fairly large group. Here, along these lines, we are interested in particular substructures of polar spaces called $m$-ovoids, having many symmetries.

The notion of an $m$-ovoid came from ``ovoids'' of the projective space $\PG(3,q)$. It was first defined in generalized quadrangles by Thas \cite{Thas89}, and was later extended naturally to finite classical polar spaces of higher rank in the work of Shult and Thas \cite{ST94}, where an {\it $m$-ovoid} was defined to be a set of points having exactly $m$ points in common with each generator (a 1-ovoid is simply called an ovoid). In \cite{BLP09} the authors found that $m$-ovoids and tight sets of generalized quadrangles share the property that they have two intersection numbers with respect to perps of points (one for points in the set and the other for points not in the set), and they coined such sets intriguing. Here, the perp of a point $P$ is the set of the points of the generalized quadrangle that are collinear with $P$. In a subsequent paper \cite{BKLP07}, the authors extended the concept of an intriguing set to finite classical polar spaces of higher rank. Over the past two decades, intriguing sets have been extensively studied and they are known to have close connection to many geometric/combinatorial objects, such as translation planes, strongly regular graphs, two-weight codes, Boolean degree one functions, completely regular codes of strength 0 and covering radius one, and Cameron-Liebler line classes. We refer the reader to \cite{BLMX,CP18,FI19,FMRQZ,FT20,FWX20,KNS,LN20} for some recent results on intriguing sets of polar spaces.

There are some straightforward methods for constructing $m$-ovoids. Let $\mc{S}$ be a polar space of rank $r$ over the field $\F_q$. As an example, the whole point set of $\mc{S}$ is a $\frac{q^r-1}{q-1}$-ovoid. Let $A$ and $B$ be an $m$-ovoid and $n$-ovoid of $\mc{S}$, respectively. If $A\subseteq B$, then $B\setminus A$ is an $(m-n)$-ovoid. In particular, the complement of $A$ is a $(\frac{q^r-1}{q-1}-m)$-ovoid. Dually, if $A$ and $B$ are disjoint, then $A\cup B$ is an $(m+n)$-ovoid.

In hyperbolic polar spaces $\mc{S}=\Q^+(2r-1,q)$, there are some less straightforward ways to construct $m$-ovoids. First of all, a non-degenerate hyperplane section of $\mc{S}$ which is an embedded $\Q(2r-2,q)$ is a $\frac{q^{r-1}-1}{q-1}$-ovoid, and an $m$-ovoid of the embedded $\Q(2r-2,q)$ is also an $m$-ovoid of $\mc{S}$. Furthermore, new $m$-ovoids can be obtained from old ones by derivation (see \cite{Kelly07}). There are other constructions in hyperbolic spaces of different ranks. If $r=2$, then $\mc{S}=\Q^+(3,q)$ which is a grid. It is easy to see that $\Q^+(3,q)$ can be partitioned into ovoids and so $m$-ovoids exist for all possible $m$. If $r=3$, then $\mc{S}=\Q^+(5,q)$ and an ovoid of $\Q^+(5,q)$ is equivalent to a spread of $\PG(3,q)$ under the Klein correspondence. It is known that the lines of $\PG(3,q)$ can be partitioned into spreads (such a partition is called a packing of $\PG(3,q)$) \cite{Den73}. This means that $m$-ovoids of $\Q^+(5,q)$ exist for all possible $m$. The situation is much more different when $r$ is greater than 3 and only a few results are known. We will be focusing on $\Q^+(7,q)$ in this work. It is known that ovoids exist in $\Q^+(7,q)$ when $q$ is even, an odd prime or $q\equiv $ 0 or $2\pmod{3}$ \cite[Table 7.3]{HT16}. As mentioned above, $m$-ovoids of $\Q(6,q)$ are $m$-ovoids of $\Q^+(7,q)$. Constructions of $m$-ovoids of $\Q(6,q)$ can be found in \cite{BKLP07,CP16,LN20}. There are further known $m$-ovoids of $\Q^+(7,q)$ which we present in Section \ref{concluding} when dealing with the isomorphism issue.

In this paper, we construct an infinite family of $(q^2+q+1)$-ovoids of $\Q^+(7,q)$, $q\equiv 1\pmod{3}$, admitting $\PGL(3,q)$ as an automorphism group and not contained in a hyperplane section. The promised $(q^2+q+1)$-ovoid will be described as a set $\cO$ of points covered by the planes spanned by two incident lines of a thin hexagon embedded in $\Q^+(7,q)$. The advantage is that in this way one can apply basic results from the theory of generalized hexagons. The properties of the hexagon that will be needed are achieved in Section \ref{sec_pre}.  In Section \ref{sec_main} we define the set $\cO$ precisely and show that it is a $(q^2+q+1)$-ovoid. Finally, we show that our example is new in Section \ref{concluding}, and a problem is posed in the end.

For the rest of this paper, we shall use the following definition of $m$-ovoids rather than the one given by Shult and Thas as mentioned in the second paragraph.
\begin{definition}\label{def_movoid}
\textup{Let $\mc{P}$ be the point set of $\Q^+(2r-1,q)$.
A subset $\mc{M}$ of $\mc{P}$ is called an {\it $m$-ovoid} if there is an integer $m>0$ such that for all $P\in \mc{P}$, 
\begin{equation}\label{def}
|P^\perp\cap \mc{M}|=\begin{cases}
m\theta_{r-1}-\theta_{r-1}+1,&\textup{if~} P\in \mc{M},\\
m\theta_{r-1},&\textup{if~}P\notin\mc{M},
\end{cases}
\end{equation}
where $\theta_r=q^{r-1}+1$ and $P^{\perp}$ is the set of points in $\Q^+(2r-1,q)$ that are collinear with $P$. }
\end{definition}

\section{Preliminaries}\label{sec_pre}

Let $p$ be a prime, and $q=p^e$, where $e\ge 1$ is an integer.  The multiplicative group of $\F_q$ will be denoted by $\F_q^*$. For an integer $n\ge 1$, the {\it relative trace} $\Tr_{q^n/q}$  from $\F_{q^n}$ to $\F_q$ is defined by
\[\Tr_{q^n/q}(z)=z+z^q+z^{q^2}+\cdots+z^{q^{n-1}},\; \forall z\in\F_{q^n}.\]
In particular, if $q=p$, then $\Tr_{q^n/q}$ is called the {\it absolute trace}.

\subsection{Cubic polynomials over $\F_q$}
Let $q=p^e$ be a prime power, where $p\ne 3$ is a prime. Let $f(x)=x^3+cx+d$ be a cubic polynomial  over $\F_q$, and let $\gamma_1,\gamma_2,\gamma_3$ be its roots in some extension field of $\F_q$. The discriminant of $f$ is defined by
\[
 \Delta_f:=(\gamma_1-\gamma_2)^2(\gamma_2-\gamma_3)^2(\gamma_3-\gamma_1)^2,
\]
which equals $-4c^3-27d^2$ for all $q$. In particular, when $q$ is even, we have $\Delta_f=d^2$. 

\begin{lemma}\label{-3s}
If $q\equiv 1\pmod{3}$ with $q$ odd, then $-3$ is a nonzero square in $\F_q$.
\end{lemma}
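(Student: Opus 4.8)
The plan is to exhibit an explicit square root of $-3$ built from a primitive cube root of unity, whose existence is exactly what the hypothesis $q\equiv 1\pmod 3$ provides. First I would note that $q\equiv 1\pmod 3$ means $3\mid q-1$, and since the multiplicative group $\F_q^*$ is cyclic of order $q-1$, it contains an element $\w$ of multiplicative order exactly $3$. From $\w^3=1$ and $\w\neq 1$ one factors $\w^3-1=(\w-1)(\w^2+\w+1)=0$ to obtain the relation $\w^2+\w+1=0$.

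The heart of the argument is then a one-line computation. Using the relation just derived,
\[
(2\w+1)^2 = 4\w^2+4\w+1 = 4(\w^2+\w+1)-3 = -3,
\]
so $-3$ is a square in $\F_q$ with square root $2\w+1$. It remains only to confirm that $-3\neq 0$: since $q\equiv 1\pmod 3$ forces $3\nmid q$, the characteristic $p$ is not $3$, and hence $-3\neq 0$ in $\F_q$. Here the hypothesis that $q$ is odd guarantees $2\neq 0$ (so that $2\w+1$ is genuinely the intended element and $-3$ is a nontrivial value), completing the proof that $-3$ is a nonzero square.

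There is no serious obstacle in this approach; the only points requiring care are the justification that a cube root of unity exists (which follows from the cyclicity of $\F_q^*$ together with $3\mid q-1$) and the verification that $-3\neq 0$ (which follows from $p\neq 3$). I would prefer this route over a quadratic-character computation such as $\eta(-3)=\eta(-1)\eta(3)$ combined with quadratic reciprocity, since the latter splits into cases on $q\bmod 4$ and $q\bmod 3$ and obscures the uniform reason, namely that a cube root of unity automatically supplies a square root of $-3$.
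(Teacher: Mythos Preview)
Your proof is correct and takes a genuinely different route from the paper. The paper argues via the Legendre symbol: writing $q=p^e$, it treats the case $p\equiv 2\pmod 3$ (where $e$ must be even, so every element of $\F_p$ is a square in $\F_q$) separately from $p\equiv 1\pmod 3$ (where quadratic reciprocity gives $\left(\frac{-3}{p}\right)=1$). Your approach instead produces an explicit square root $2\w+1$ from a primitive cube root of unity $\w$, using only the cyclicity of $\F_q^*$ and the identity $\w^2+\w+1=0$. Your argument is more elementary and uniform, avoiding both the case split on $p\bmod 3$ and the appeal to reciprocity; the paper's approach, by contrast, ties the result to the classical quadratic-character machinery, which may be more familiar to readers already thinking in those terms.
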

\begin{proof}
Let $q=p^e$. In the case $p\equiv 2 \pmod{3}$, we have $e$ is even, and so $-3$ is a nonzero square in $\F_q$. In the case $p\equiv 1\pmod{3}$, by the quadratic reciprocity law we have
\[\left(\frac{-3}{p}\right)=\left(\frac{-1}{p}\right)\left(\frac{3}{p}\right)=(-1)^{(p-1)/2}(-1)^{(3-1)/2\cdot(p-1)/2}\left(\frac{p}{3}\right)=1.\]
Here $(\frac{\cdot}{p})$ is the Legendre symbol. Hence $-3$ is a square in $\F_p$ and also a square in $\F_q$. 
\end{proof}

We shall need the following theorem giving the nonexistence condition for the roots of $f$ in $\F_q$ in various situations.

\begin{theorem}[\cite{Dixon,Williams1975}]\label{cubic}
Let $q$ be a prime power with $q\equiv 1\pmod{3}$. Suppose that  $f(x)=x^3+cx+d$ is a polynomial over $\F_q$ with discriminant $\Delta_f\ne 0$.
\begin{enumerate}
\item[(i)] If $q$ is odd, and write $-3=\alpha^2$ for some $\alpha\in\F_{q}$, then $f$ has no roots in $\F_q$ if $\Delta_f$ is a square in $\F_q$, say $\Delta_f=81\beta^2$ for some $\beta\in\F_q$, and $2^{-1}(-d+\alpha\beta)$ is not a cube in $\F_{q}$.

\item[(ii)] If $q$ is even, then $f$ has no roots in $\F_q$ if $\Tr_{q/2}(c^3d^{-2})=\Tr_{q/2}(1)$, and the roots $t_1,t_2$ of $t^2+dt+c^3=0$ are not cubes in $\F_q$.
 \end{enumerate}
\end{theorem}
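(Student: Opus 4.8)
The plan is to translate the condition ``$f$ has no root in $\F_q$'' into ``$f$ is irreducible over $\F_q$'' and to analyze irreducibility through the Cardano/Lagrange-resolvent description of the roots. Since $q\equiv1\pmod3$, the field $\F_q$ contains a primitive cube root of unity $\omega$, so for a nonzero $a\in\F_q$ the polynomial $z^3-a$ either splits completely (when $a$ is a cube) or is irreducible (when $a$ is not a cube). Writing a root of $f$ in the form $u+v$ with $uv=-c/3$ (in odd characteristic) forces $u^3$ and $v^3$ to be the two roots of a resolvent quadratic, so the whole question reduces to whether these resolvent roots are cubes in $\F_q$. The explicit quantities appearing in the statement, namely $2^{-1}(-d+\alpha\beta)$ in (i) and $t_1,t_2$ in (ii), are exactly these resolvent roots.

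For part (i), suppose toward a contradiction that $f$ has a root in $\F_q$. As $\Delta_f$ is a nonzero square, the Frobenius permutation of the three (distinct) roots is even, hence lies in $A_3=\langle(123)\rangle$; since it fixes the assumed rational root it must be the identity, so all three roots $r_0,r_1,r_2$ lie in $\F_q$. The Lagrange resolvent $u=\tfrac13(r_0+\omega^2r_1+\omega r_2)$ then lies in $\F_q$, and a direct computation gives $u^3=2^{-1}(-d\pm\alpha\beta)$: indeed $u^3$ is a root of $t^2+dt-c^3/27$, whose discriminant is $-\Delta_f/27=(\alpha\beta)^2$ by Lemma~\ref{-3s} and the normalization $\Delta_f=81\beta^2$. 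Thus $2^{-1}(-d+\alpha\beta)$, being $u^3$ or $v^3$ and hence the cube of an element of $\F_q$, would be a cube, contradicting the hypothesis. The degenerate case $c=0$ is handled directly, since then $f=x^3+d$ and the relevant resolvent root equals $-d$, whose being a non-cube is precisely the statement that $x^3=-d$ has no solution.

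For part (ii) the same scheme applies, but two things change in characteristic $2$. First, the Cardano substitution $x=u+v$ with $uv=c$ now yields $u^3+v^3=d$ and $u^3v^3=c^3$, so $u^3,v^3$ are the roots $t_1,t_2$ of $t^2+dt+c^3$ (the constant $-c^3/27$ becomes $c^3$ because $27=1$ and $-1=1$). Second, the discriminant $\Delta_f=d^2$ is automatically a square, so the square/non-square dichotomy is useless and must be replaced by the Artin--Schreier criterion: writing $t=ds$ turns the resolvent into $s^2+s=c^3d^{-2}$, which is solvable in $\F_q$ if and only if $\Tr_{q/2}(c^3d^{-2})=0$. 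Because $q=2^e\equiv1\pmod3$ forces $e$ even, we have $\Tr_{q/2}(1)=0$, so the first hypothesis is exactly the statement that the resolvent splits over $\F_q$ (here $d\ne0$ since $\Delta_f\ne0$). Now assume a root $r\in\F_q$ and let $u,v$ be the roots of $z^2+rz+c$; then $\{u^3,v^3\}=\{t_1,t_2\}$. If $u,v\in\F_q$ then $t_1=u^3$ is a cube, contradicting the second hypothesis; if instead $u,v$ are conjugate in $\F_{q^2}\setminus\F_q$ then $t_2=t_1^q\ne t_1$, contradicting the fact that the resolvent splits. Either way no such $r$ exists.

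The main obstacle is the characteristic-$2$ analysis in (ii): one cannot divide by $2$, the discriminant carries no information, and the correct splitting criterion is the additive (Artin--Schreier) one rather than the multiplicative (quadratic-residue) one used in (i). Getting the bookkeeping right --- identifying $t_1,t_2$ with $u^3,v^3$, checking separability of the resolvent via $d\ne0$, and observing that $\Tr_{q/2}(1)=0$ under $q\equiv1\pmod3$ so that the stated condition really means ``the resolvent splits'' --- is where the care is needed. By contrast, in (i) the only delicate points are the passage ``rational root $\Rightarrow$ all roots rational'' via the square discriminant and the evaluation of the Lagrange resolvent cube using Lemma~\ref{-3s}.
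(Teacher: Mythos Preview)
The paper does not prove this theorem at all: it is quoted from the references \cite{Dixon,Williams1975} and used as a black box in the proof of Lemma~\ref{xyz}. So there is no ``paper's own proof'' to compare against; you have supplied a self-contained argument where the authors simply cite the literature.

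Your argument is essentially correct and follows the classical Cardano/Lagrange-resolvent route. In part (i) the chain ``$\Delta_f$ a nonzero square $\Rightarrow$ Frobenius acts as an element of $A_3$ $\Rightarrow$ a rational root forces all roots rational $\Rightarrow$ the resolvents $u,v\in\F_q$ $\Rightarrow$ both roots $2^{-1}(-d\pm\alpha\beta)$ of $t^2+dt-c^3/27$ are cubes'' is sound, and the identification of the discriminant of the resolvent quadratic with $(\alpha\beta)^2$ via $-\Delta_f/27=-3\beta^2=\alpha^2\beta^2$ is exactly right. In part (ii) your reduction to the resolvent $t^2+dt+c^3$ and the Artin--Schreier reformulation of its splitting are correct, including the observation that $q\equiv 1\pmod 3$ forces $e$ even and hence $\Tr_{q/2}(1)=0$.

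One small slip in the final sentence of (ii): you write ``$t_2=t_1^q\ne t_1$, contradicting the fact that the resolvent splits''. In fact the resolvent \emph{does} split, so $t_1\in\F_q$ and $t_1^q=t_1$; the contradiction is rather that $t_2=t_1^q=t_1$ while the resolvent is separable (its derivative is $d\ne 0$), forcing $t_1\ne t_2$. The logic still closes, but the sentence should be rephrased accordingly.
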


\begin{lemma}\label{xyz}
Let $q$ be a prime power with $q\equiv 1\pmod{3}$ and $\theta$ a non-cubic element of $\F_q^*$. Then the equation
$x^3+\theta y^3+\theta^2z^3-3\theta xyz=0$ has a unique solution in $\F_q$, i.e., $x=y=z=0$. 
\end{lemma}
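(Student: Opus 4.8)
The plan is to recognize the left-hand side as the norm form of the cubic extension $\F_{q^3}/\F_q$, so that its vanishing set is controlled by the fact that a norm is nonzero on nonzero elements. First I would show that the polynomial $t^3-\theta$ is irreducible over $\F_q$. Since $q\equiv 1\pmod 3$, the cubes form a subgroup of index $3$ in $\F_q^*$, so the hypothesis that $\theta$ is a non-cube means $t^3-\theta$ has no root in $\F_q$; a cubic with no root is irreducible. I would then fix a root $\alpha$ of $t^3=\theta$ in $\F_{q^3}$, so that $\alpha^3=\theta$ and $\{1,\alpha,\alpha^2\}$ is an $\F_q$-basis of $\F_{q^3}=\F_q(\alpha)$.

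Next I would compute the relative norm $\N_{q^3/q}(u)$ of a generic element $u=x+y\alpha+z\alpha^2$. Because $q\equiv 1\pmod 3$, the field $\F_q$ contains a primitive cube root of unity $\w$. From $(\alpha^{q-1})^3=\theta^{q-1}=1$ together with $\alpha\notin\F_q$, the element $\alpha^{q-1}$ is a primitive cube root of unity, so after relabelling $\w$ one may take $\alpha^q=\w\alpha$ and $\alpha^{q^2}=\w^2\alpha$. Hence the three Galois conjugates of $u$ are $x+\w^j y\alpha+\w^{2j}z\alpha^2$ for $j=0,1,2$. Applying the classical factorization $a^3+b^3+c^3-3abc=\prod_{j=0}^{2}(a+\w^j b+\w^{2j}c)$ with $a=x$, $b=y\alpha$, $c=z\alpha^2$, and using $\alpha^3=\theta$, $\alpha^6=\theta^2$, I would obtain
\[
\N_{q^3/q}(x+y\alpha+z\alpha^2)=x^3+\theta y^3+\theta^2 z^3-3\theta xyz .
\]

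Finally I would conclude. Since $\F_{q^3}$ is a field, the norm map is multiplicative and nonzero on every nonzero element, so the displayed expression vanishes precisely when $x+y\alpha+z\alpha^2=0$; the linear independence of $1,\alpha,\alpha^2$ over $\F_q$ then forces $x=y=z=0$. The step I expect to require the most care is the verification of the norm formula, namely correctly identifying the conjugates of $\alpha$ and matching the product to the factorization identity. This is exactly where both hypotheses enter: $q\equiv 1\pmod 3$ guarantees $\w\in\F_q$ (so that the conjugates stay of the stated shape) and also underlies the irreducibility of $t^3-\theta$, while the non-cubicity of $\theta$ is what makes $\F_q(\alpha)$ a genuine degree-$3$ field rather than a split or reducible algebra. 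Everything else reduces to routine algebra.
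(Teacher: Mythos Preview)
Your argument is correct and is a genuinely different route from the paper's proof. You identify the cubic form as the norm form of $\F_{q^3}=\F_q(\alpha)$ with $\alpha^3=\theta$, using the factorization $a^3+b^3+c^3-3abc=\prod_{j=0}^{2}(a+\w^j b+\w^{2j}c)$ together with $\alpha^q=\w\alpha$; since the norm of a nonzero field element is nonzero, only the trivial solution survives. The paper instead treats the expression as a cubic in $x$ with parameters $y,z$, computes its discriminant, and invokes explicit root-existence criteria for cubics over $\F_q$ (Dickson for odd $q$, Williams for even $q$), handling the two parities separately. Your approach is more conceptual and characteristic-free: both hypotheses ($q\equiv 1\pmod 3$ giving $\w\in\F_q$, and $\theta$ non-cubic giving irreducibility of $t^3-\theta$) are used in a single stroke, and no case split is needed. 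The paper's approach, by contrast, is more elementary in the sense that it relies only on the quoted root criteria rather than on Galois-theoretic language, at the cost of separate computations for odd and even $q$.
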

\begin{proof}
If $z=0$, then the equation is reduced to $x^3+\theta y^3=0$. Since $\theta$ is not a cube, the above equation has solutions in $\F_q$ if and only if $x=y=0$. Similarly, if $x=0$ or $y=0$, the original equation has solutions in $\F_q$ for $x,y,z$ if and only if $x=y=z=0$.
So it is enough to show that the polynomial \[f(x):=x^3-3\theta yzx+\theta y^3+\theta^2z^3\] has no roots in $\F_q$ for any $y,z \in \F_q^*$. 

{\it Case 1. Assume $q$ is odd.} The discriminant of $f$ is
\begin{align*}
\Delta_f=&-4(-3\theta yz)^3-27(\theta y^3+\theta^2z^3)^2=(-3)\cdot 9(\theta y^3-\theta^2z^3)^2.
\end{align*}
By Lemma \ref{-3s}, we see that $\Delta_f$ is a nonzero square in $\F_q$. Furthermore, if we write $-3=\alpha^2$, and $ \Delta_f=81\beta^2=-27\alpha^2\beta^2$, then $\alpha\beta=\pm(\theta y^3-\theta^2z^3)$. This implies that $2^{-1}(\alpha\beta-(\theta y^3+\theta^2z^3))=\theta y^3$ or $\theta^2z^3$, neither of which is a cube in $\F_q$. Therefore $f(x)$ has no solutions in $\F_q$ by Theorem \ref{cubic}.

{\it Case 2. Assume $q$ is even.} Then $q=2^n$ and $n$ is even. We have
\[\tr_{q/2}\left(\frac{(\theta yz)^3}{(\theta y^3+\theta^2z^3)^2}\right)=\tr_{q/2}\left(\frac{\theta y^3}{\theta y^3+\theta^2 z^3}+\frac{\theta^2y^6}{(\theta y^3+\theta^2 z^3)^2}\right)=0=\tr_{q/2}(1).\]
Moreover, the equation $t^2+(\theta y^3+\theta^2 z^3)t+(\theta y z)^3=0$ has solutions $t_1=\theta y^3$ and $t_2=\theta^2 z^3$, neither of which is a cube in $\F_q$. Using Theorem \ref{cubic} again, we conclude that $f(x)$ has no solutions in $\F_q$.
\end{proof}

\subsection{An embedded generalized hexagon in $\PG(7,q)$}

Let $\mc{I}$ be an point-line incidence geometry with point set $\mc{P}$, line set $\mc{L}$ and incidence relation $\textbf{I}\subseteq \mc{P}\times \mc{L}$. The {\it incidence graph} of $\mc{I}$ is the graph with vertex set $\mc{P}\cup\mc{L}$, where adjacency is given by the incidence relation \textbf{I}. A {\it generalized hexagon} is a point-line geometry such that its incidence graph has diameter 6 and girth 12. A generalized hexagon is said to have {\it order}
$(s,t)$ if every line is incident with exactly $s+1$ points and if every point is incident with precisely $t+1$ lines. A quick example of a generalized hexagon is the {\it flag geometry} $\mc{I}$ of a projective plane $\Pi$ of order $s$ defined as follows. The points of $\mc{I}$ are the {\it flags} of $\Pi$ (i.e., the incident point-line pairs); the lines of $\mc{I}$ are the points and lines of $\Pi$. Incidence between points and lines of $\mc{I}$ is reverse containment. It follows that $\mc{I}$ is a generalized hexagon of order $(s,1)$. We refer the reader to \cite{polygon} for more background information about generalized hexagons.

Let $\cS_{2,2}$ be the Segre variety of $\PG(8, q)$ consisting of the $(q^2+q+1)^2$ points represented by
\begin{align}
& (a_1b_1, a_1b_2, a_1b_3, a_2b_1, a_2b_2, a_2b_3, a_3b_1, a_3b_2, a_3b_3), \label{point}
\end{align}
where $a_i, b_i \in \F_q$, $i = 1,2,3$, with $(a_1, a_2, a_3) \ne (0,0,0)$ and $(b_1, b_2, b_3) \ne (0,0,0)$. By means of the map 
\begin{equation}
\rho: M = (m_{i j}) \in \cM_{3,3}(q) \mapsto (m_{11}, m_{12}, m_{13}, m_{21}, m_{22}, m_{23}, m_{31}, m_{32}, m_{33}) \in \F_q^9, \label{map}
\end{equation}
points of $\cS_{2,2}$ correspond to rank one $3 \times 3$ matrices over $\F_q$. Moreover, $\cS_{2, 2}$ is contained in the parabolic quadric $\cQ(8, q)$ given by
\begin{align*}
& X_2 X_4 - X_1 X_5 + X_3 X_7 - X_1 X_9 + X_6 X_8 - X_5 X_9 = 0.
\end{align*}
There is a group, say $G$, isomorphic to $\PGL(3, q)$ fixing both $\cS_{2, 2}$ and $\cQ(8, q)$. This can be seen by the map
\[\rho^{-1}(X)\mapsto A\rho^{-1}(X)A^{-1}, \;\forall A\in \GL(3,q), X\in \cS_{2, 2}\cap\cQ(8, q).\]
Such a group fixes the point of $\PG(8, q)$ corresponding to the $3 \times 3$ identity matrix and the hyperplane $\Pi: X_1+X_5+X_9 = 0$. Note that $\Pi \cap \cQ(8, q)$ is hyperbolic, elliptic or a cone, according to whether $q \equiv 1$, $-1$, or $0 \pmod{3}$. 

Assume that $q \equiv 1 \pmod{3}$ and denote by $\cQ^+(7, q)$ the hyperbolic quadric obtained by intersecting $\cQ(8, q)$ with $\Pi$. Let $\perp$ be the polarity of $\Pi$ associated with $\cQ^+(7, q)$. By \cite[pp. 99--100]{TV}, the set $\Pi \cap \cS_{2, 2}$ consists of $(q+1)(q^2+q+1)$ points and $2(q^2+q+1)$ lines of a {\it thin hexagon}, say $\Gamma$; the hexagon $\Gamma$ corresponds to the point-line flag geometry of $\PG(2, q)$. Every line of $\Gamma$ has $q+1$ points of $\Gamma$. Through a point $x$ of $\Gamma$ there pass two lines of $\Gamma$ spanning a plane and we will denote this plane by $\pi_x$. It is readily seen that $\pi_x \subset \cQ^+(7, q)$, for $x \in \Gamma$. The incidence graph of $\Gamma$ is the girth $12$ bipartite graph where the two parts are the points and the lines of $\Gamma$ and adjacency is given by incidence. The {\it distance} between points and lines of $\Gamma$ is that between two vertices of the incidence graph of $\Gamma$.

\begin{lemma}\label{l_0}
The set of $2(q+1)$ lines of $\Gamma$ having distance $1$ or $3$ from a point $x \in \Gamma$ span the hyperplane $x^\perp$ of $\Pi$.
\end{lemma}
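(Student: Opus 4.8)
The plan is to work in coordinates, exploiting the identification of the points of $\Gamma$ with the flags of $\PG(2,q)$ and of the lines of $\Gamma$ with the points and lines of $\PG(2,q)$. Under $\rho$, a point of $\Gamma$ is a rank-one, trace-zero matrix $\mathbf a^{\mathsf T}\mathbf b$ with $\mathbf a\cdot\mathbf b=0$ (here $\mathbf a,\mathbf b$ are row vectors and $m_{ij}=a_ib_j$), corresponding to the incident pair $([\mathbf a],[\mathbf b])$. A line of $\Gamma$ through a point $R=[\mathbf r]$, which I call a \emph{point-line}, is the pencil $\{\mathbf r^{\mathsf T}\mathbf b:\mathbf b\cdot\mathbf r=0\}$, and dually a \emph{line-line} associated with a line $n=[\mathbf n]$ is $\{\mathbf a^{\mathsf T}\mathbf n:\mathbf a\cdot\mathbf n=0\}$; both are projective lines lying in $\Pi$. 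Since $G\cong\PGL(3,q)$ acts on the flags as the natural (transitive) action, hence transitively on the points of $\Gamma$, I would fix $x$ to be the flag $([1:0:0],\,\ell)$ with $\ell:X_2=0$, so that $x=\rho(E_{12})$, where $E_{ij}$ denotes the elementary matrix with a single $1$ in position $(i,j)$.

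First I would translate the distance condition into the flag geometry. Writing $x=(P,\ell)$ with $P=[1:0:0]$, the two lines at distance $1$ from $x$ are the point-line $P$ and the line-line $\ell$; tracing the paths of length $3$ in the incidence graph shows that the lines at distance $3$ are the line-lines $m$ with $m\ni P$, $m\neq\ell$, together with the point-lines $Q$ with $Q\in\ell$, $Q\neq P$. Hence the $2(q+1)$ lines at distance $1$ or $3$ are precisely the $q+1$ point-lines through the points of $\ell$ and the $q+1$ line-lines through the lines of $P$. Next I would compute $x^\perp$: after the substitution $X_1=m_{11},\dots,X_9=m_{33}$ the defining form of $\cQ(8,q)$ is the negative of the sum of the principal $2\times 2$ minors of $M$, and a direct evaluation of the associated bilinear form at $M_0=E_{12}$ gives $B(M_0,N)=n_{21}$. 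Therefore $x^\perp=\{N\in\Pi:\ n_{21}=0\}$, a subspace of vector dimension $7$.

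Finally I would show that the span equals $x^\perp$. A point $Q=[\mathbf q]\in\ell$ has $q_2=0$, so every matrix on the corresponding point-line has vanishing second row; thus the point-lines through the points of $\ell$ lie in the $5$-space $A=\{N\in\Pi:\ \text{second row}=0\}$, and evaluating at $[1:0:0],[0:0:1],[1:0:1]$ exhibits the basis $E_{12},E_{13},E_{31},E_{32},E_{11}-E_{33}$ of $A$ among them. Dually, the line-lines through the lines of $P$ span the $5$-space $B=\{N\in\Pi:\ \text{first column}=0\}$. Both $A$ and $B$ are contained in $x^\perp$ since each forces $n_{21}=0$, while $A\cap B=\langle E_{12},E_{13},E_{32}\rangle$ has dimension $3$; hence $\dim(A+B)=5+5-3=7=\dim x^\perp$, giving $A+B=x^\perp$ and proving the claim.

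I expect the main obstacle to be the bookkeeping in this last step: one must verify that the rank-one matrices lying on the relevant lines genuinely span the full $5$-dimensional spaces $A$ and $B$, rather than a proper subspace, and then pin down the $3$-dimensional intersection, since any slip here alters the final dimension count. The enumeration of the distance-$1$ and distance-$3$ lines in the thin hexagon and the recognition of the quadratic form as $-c_2(M)$ are the other points requiring care, but both become routine once the coordinates above are fixed.
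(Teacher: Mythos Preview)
Your argument is correct, but it follows a different route from the paper's. The paper's proof is essentially two lines: it invokes \cite[Lemma~1]{TV}, which already asserts that the $2(q+1)$ lines in question span a hyperplane of $\Pi$, and then observes that each such line lies in some totally singular plane $\pi_z$ with $z\in x^\perp$, hence in $x^\perp$; equality of hyperplanes is then automatic. Your approach avoids the external reference entirely and instead carries out a self-contained coordinate computation: you normalise $x$ to $\rho(E_{12})$, identify $x^\perp$ as $\{n_{21}=0\}$, and show directly that the point-lines and line-lines span the two $5$-dimensional subspaces $A$ (second row zero) and $B$ (first column zero) with $\dim(A\cap B)=3$, forcing $A+B=x^\perp$. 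The trade-off is clear: the paper's proof is shorter and more conceptual but depends on a result proved elsewhere, while yours is longer and more computational but stands on its own and makes the structure of $x^\perp$ completely explicit.
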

\begin{proof}
By \cite[Lemma 1]{TV}, it is enough to show that these $2(q+1)$ lines are contained in the hyperplane $x^\perp$. To see this fact observe that if $r_1, r_2$ are the two lines of $\Gamma$ through $x$, then $r_1, r_2 \subset \pi_x \subset x^\perp$. If $\ell$ is a line of $\Gamma$ and $d(x, \ell) = 3$, then $\ell$ is incident with $r_1$ or $r_2$ in a point $z$ and $\ell \subset \pi_z \subset x^\perp$.
\end{proof}

It follows from the previous lemma that the lines of $\Gamma$ at distance $5$ from $x$ and the points of $\Gamma$ at distance $6$ from $x$ are not contained in $x^\perp$. 

Let $U_i$ be the point of $\PG(8, q)$ having $1$ in the $i$-th position and $0$ elsewhere.

\begin{lemma}\label{l_1}
If $x$ is a point of $\Gamma$, then $\pi_{x}^\perp$ meets $\Gamma$ precisely in the two lines of $\Gamma$ through $x$. 
\end{lemma}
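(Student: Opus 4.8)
\emph{The plan is as follows.} The inclusion $r_1\cup r_2\subseteq \pi_x^\perp\cap\Gamma$ is immediate: the two lines $r_1,r_2$ of $\Gamma$ through $x$ both lie in the plane $\pi_x$, and since $\pi_x\subset\cQ^+(7,q)$ is totally singular we have $\pi_x\subseteq\pi_x^\perp$, whence $r_1\cup r_2\subseteq\pi_x^\perp\cap\Gamma$. Everything therefore reduces to the reverse inclusion $\pi_x^\perp\cap\Gamma\subseteq r_1\cup r_2$, i.e.\ that no point of $\Gamma$ off $r_1\cup r_2$ can lie in $\pi_x^\perp$.

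First I would record the criterion that does the work. Since $\pi_x=\langle r_1,r_2\rangle$, a point $y$ of $\Gamma$ lies in $\pi_x^\perp$ if and only if $\pi_x\subseteq y^\perp$, that is, if and only if $r_1\subseteq y^\perp$ and $r_2\subseteq y^\perp$. Applying Lemma \ref{l_0} with $y$ in place of $x$, together with the remark following it, a line $r$ of $\Gamma$ is contained in $y^\perp$ precisely when $d(y,r)\in\{1,3\}$, whereas a line at distance $5$ is not. As point-to-line distances in $\Gamma$ are odd and at most $5$, it follows that for $y\in\pi_x^\perp\cap\Gamma$ one has $d(y,r_i)\le 3$ for $i=1,2$. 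If in addition $y\notin r_1\cup r_2$, then $d(y,r_i)\ne 1$, so $d(y,r_1)=d(y,r_2)=3$; the whole matter reduces to showing that this cannot occur.

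The endgame is a girth argument. Since $\Gamma$ is thin of order $(q,1)$, only two lines pass through $x$, so $d(y,x)=2$ would place $y$ on $r_1$ or $r_2$; hence $d(y,x)\ge 4$. From $d(y,r_i)=3$ I extract the foot $z_i\in r_i$ with $d(y,z_i)=2$, and $d(y,x)\ge 4$ guarantees $z_1,z_2\ne x$, whence $z_1\ne z_2$. Now $z_1$ and $z_2$ are joined both through $x$, via the path $z_1\,r_1\,x\,r_2\,z_2$, and through $y$, via a path $z_1\,s_1\,y\,s_2\,z_2$, so $d(z_1,z_2)\in\{2,4\}$. If $d(z_1,z_2)=2$, the common line $t$ of $z_1,z_2$ satisfies $t\ne r_1,r_2$, and together with $r_1,r_2$ it yields a triangle on $x,z_1,z_2$, i.e.\ a $6$-cycle in the incidence graph of $\Gamma$, contradicting girth $12$. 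If $d(z_1,z_2)=4$, the two displayed paths are distinct geodesics, since their midpoints $x\ne y$ differ, contradicting the uniqueness of geodesics between vertices at distance $<6$ in a generalized hexagon. Either way we reach a contradiction, so $d(y,r_1)=d(y,r_2)=3$ is impossible and the reverse inclusion follows.

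I expect the girth step to be the only genuine obstacle: the reduction and the perp-criterion are formal once Lemma \ref{l_0} is available, whereas ruling out $d(y,r_1)=d(y,r_2)=3$ needs the careful extraction of the feet $z_1,z_2$ and the case split on $d(z_1,z_2)$. As a cross-check and a fully computational alternative, one can pass to the model in which $\cQ^+(7,q)$ is the space of trace-zero $3\times 3$ matrices with bilinear form $(M,N)\mapsto\Tr(MN)$ and the points of $\Gamma$ are the rank-one matrices $a\ell^{\top}$ with $a^{\top}\ell=0$, i.e.\ the flags $(\langle a\rangle,\{v:\ell^{\top}v=0\})$ of $\PG(2,q)$; writing $y=cd^{\top}$ and evaluating $\Tr(yM)=d^{\top}Mc$ on the two pencils spanning $\pi_x$ converts $y\in\pi_x^\perp$ into the incidences ``the point of $x$ lies on the line of $y$ and the point of $y$ lies on the line of $x$'', at which stage the axiom that two distinct points lie on a unique line delivers the same contradiction directly.
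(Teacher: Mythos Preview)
Your proof is correct and takes a genuinely different route from the paper's.

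The paper argues by transitivity: since $G\simeq\PGL(3,q)$ acts transitively on the points of $\Gamma$ (a forward reference to Lemma~\ref{lem_orb}), one may take $x=U_3$, write down $\pi_{U_3}^\perp$ explicitly as the subspace $X_4=X_7=X_8=0$ of $\Pi$, and check by direct coordinate calculation that its intersection with $\Gamma$ is exactly the two lines through $U_3$.

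Your argument is instead purely synthetic. You reduce membership of $y$ in $\pi_x^\perp$ to the pair of conditions $r_1,r_2\subseteq y^\perp$, invoke Lemma~\ref{l_0} (and the observation following it) to translate this into $d(y,r_1),d(y,r_2)\le 3$, and then rule out $d(y,r_1)=d(y,r_2)=3$ by extracting the feet $z_1,z_2$ and producing a closed walk of length at most $8$ in the incidence graph, contradicting girth $12$. The case split on $d(z_1,z_2)\in\{2,4\}$ is handled cleanly; in the second case the two length-$4$ paths through $x$ and through $y$ are indeed distinct geodesics since $x\ne y$, and uniqueness of geodesics at distance $<6$ gives the contradiction.

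What each approach buys: the paper's computation is shorter once coordinates are in hand, but relies on a forward reference for transitivity. Your argument uses only Lemma~\ref{l_0} and the axioms of a generalized hexagon, so it is self-contained at this point of the paper and would apply verbatim to any thin hexagon fully embedded in a polar space for which the analogue of Lemma~\ref{l_0} holds. One minor remark on your closing ``cross-check'': the bilinear form $(M,N)\mapsto\Tr(MN)$ agrees (up to scalar) with the form of $\cQ^+(7,q)$ on the trace-zero hyperplane only in odd characteristic; in even characteristic $\Tr(M^2)=(\Tr M)^2$ vanishes identically there, so that alternative model needs adjustment when $q$ is even. This does not affect your main argument.
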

\begin{proof}
Since the group $G$ acts transitively on points of $\Gamma$ (by Lemma \ref{lem_orb}), we may assume without loss of generality that $x$ is the point $U_3$. Then some easy calculations show that the $4$-dimensional projective space $X_4 = X_7 = X_8 = 0$ of $\Pi$, i.e. $\pi_{U_3}^\perp$, meets $\Gamma$ precisely in the two lines of $\Gamma$ through $U_3$. 
\end{proof}

An {\it apartment} of $\Gamma$ consists of $6q$ points and $6$ lines of $\Gamma$ forming an ordinary $6$-gon. By \cite[Lemma 2]{TV} an apartment of $\Gamma$ spans a $5$-dimensional projective space of $\Pi$.

\begin{lemma}\label{lem_cap}
If $x, y$ are distinct points of $\Gamma$, then $\pi_x \cap \pi_y$ is either a line of $\Gamma$ or a point of $\Gamma$ or empty, according to whether $d(x, y)$ equals $2$, $4$ or $6$, respectively. 
\end{lemma}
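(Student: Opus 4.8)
The plan is to reduce to finitely many configurations using the group $G\cong\PGL(3,q)$ and then read off each intersection from explicit coordinates. Since two distinct points of a generalized hexagon lie at distance $2$, $4$ or $6$, these are the only cases. As $\Gamma$ is the flag geometry of $\PG(2,q)$ and $G$ is transitive on its flags (Lemma \ref{lem_orb}), the Bruhat decomposition of $G$ shows that $G$ has exactly six orbits on ordered pairs of points of $\Gamma$, indexed by the Weyl group $S_3$ and carrying distances $0,2,2,4,4,6$; in particular $G$ is transitive on opposite (distance $6$) pairs, while the two orbits at distance $2$ and the two at distance $4$ are interchanged by the duality of $\PG(2,q)$. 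It therefore suffices to treat one representative pair at each distance. For the bookkeeping I would use the identification of Section \ref{sec_pre}: a point of $\Gamma$ is a trace-zero rank-one matrix $ab^{T}$ with $a\cdot b=0$, and the plane $\pi_x$ attached to $x=ab^{T}$ is the span of the two lines of $\Gamma$ through $x$, namely the $2$-dimensional matrix spaces $\{[av^{T}]:v\cdot a=0\}$ (the point $[a]$) and $\{[ub^{T}]:u\cdot b=0\}$ (the line $[b]$).

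Before computing I would record two facts. First, since $\pi_x\subseteq\cQ^+(7,q)$ is totally singular we have $\pi_x\subseteq\pi_x^{\perp}$, so Lemma \ref{l_1} gives that $\pi_x\cap\Gamma$ is \emph{exactly} the two lines of $\Gamma$ through $x$. Consequently $\pi_x\neq\pi_y$ whenever $x\neq y$: equal planes would force the two lines through $x$ to equal the two lines through $y$, hence $x=y$ as each point is the unique common point of its two lines. Thus $\pi_x\cap\pi_y$ is empty, a point, or a line. The same fact shows that any point of $\Gamma$ lying in $\pi_x\cap\pi_y$ is a common neighbour of $x$ and $y$ at distance $\leq 2$. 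This already settles distance $2$: the common line $r$ of $\Gamma$ through $x$ and $y$ lies in both planes, so $\pi_x\cap\pi_y=r$. At distance $4$, the unique median $z$ (the only point at distance $2$ from both, unique because the incidence graph has girth $12$) lies on a line of $\Gamma$ through $x$ and one through $y$, so $z\in\pi_x\cap\pi_y$, and the intersection is nonempty.

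Next I would rule out a full line at distances $4$ and $6$. Suppose $\pi_x\cap\pi_y$ were a line $\ell$. If $x\in\ell\subseteq\pi_y$ then $x\in\pi_y\cap\Gamma$, forcing $d(x,y)\leq 2$; so $x\notin\ell$, and likewise $y\notin\ell$. In the plane $\pi_x$ the line $\ell$ meets each of the two lines of $\Gamma$ through $x$ in exactly one point, and these two points are distinct since their coincidence would be the point $x\notin\ell$. Both are points of $\Gamma$ lying in $\ell\subseteq\pi_x\cap\pi_y$, so by the recorded fact each lies on a line of $\Gamma$ through $x$ and on one through $y$, hence at distance $\leq 2$ from both. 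This yields two distinct common neighbours, contradicting the uniqueness of the median at distance $4$ and their nonexistence at distance $6$. Hence at distance $4$ the intersection is precisely the median $z\in\Gamma$, and at distance $6$ it is at most a single point, which cannot lie on $\Gamma$.

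The remaining, and I expect hardest, point is to exclude the spurious possibility that for opposite $x,y$ the planes meet in a single point of $\PG(7,q)$ off $\Gamma$. I would dispatch this by the transitivity of $G$ on opposite pairs: it then suffices to exhibit one opposite pair with empty intersection. Writing $E_{ij}$ for the matrix with a single $1$ in position $(i,j)$, the pair $x=E_{12}$, $y=E_{21}$ is opposite, and one computes $\pi_x=\langle E_{12},E_{13},E_{32}\rangle$ and $\pi_y=\langle E_{21},E_{23},E_{31}\rangle$; these are spanned by disjoint matrix units, so $\pi_x\cap\pi_y=\emptyset$, whence the same holds for every opposite pair. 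The identical matrix-unit computation applied to the representatives $(E_{12},E_{13})$ and $(E_{13},E_{32})$ reproduces the line at distance $2$ and the point at distance $4$, so once the orbit reduction is in place the whole lemma can alternatively be obtained by direct computation; the one genuinely delicate step remains the exclusion of an off-$\Gamma$ intersection point in the opposite case, which the explicit representative resolves.
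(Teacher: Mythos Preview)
Your proof is correct, but it follows a genuinely different route from the paper's. The paper argues synthetically throughout: for $d(x,y)=4$ it invokes Lemma~\ref{l_0} to see that one of the two lines of $\Gamma$ through $y$ is not contained in $x^\perp$, whence $\pi_y\cap x^\perp$ is the other line and $\pi_x\cap\pi_y$ collapses to the single median point on it; for $d(x,y)=6$ it builds an apartment on $x$ and $y$, quotes \cite[Lemma~2]{TV} that an apartment spans a $\PG(5,q)$, and reads off $\dim(\pi_x\cap\pi_y)=-1$ from the dimension formula. Your approach instead (i) extracts from Lemma~\ref{l_1} that $\pi_x\cap\Gamma$ is exactly the two lines through $x$, (ii) uses a pure girth-$12$ counting argument to exclude a line of intersection at distances $4$ and $6$ (two common neighbours of $x$ and $y$ would force a short cycle), and (iii) handles the residual ``single off-$\Gamma$ point at distance $6$'' case by Bruhat transitivity on opposite flag pairs together with the explicit matrix-unit computation for $(E_{12},E_{21})$. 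The paper's argument is cleaner once \cite[Lemma~2]{TV} is in hand and avoids both group-theoretic input and coordinates; yours trades that external reference for the standard big-cell fact about $\PGL(3,q)$ and a one-line computation, and the girth argument in step (ii) is an elegant substitute for the perp analysis via Lemma~\ref{l_0}. Either way the lemma follows; note only that your citation of Lemma~\ref{lem_orb} for flag transitivity is a forward reference, though the paper itself does the same in the proof of Lemma~\ref{l_1}.
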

\begin{proof}
Let $x, y$ be distinct points of $\Gamma$ and let $\ell_1$, $\ell_2$ be the two lines of $\Gamma$ through $y$. If $d(x, y) = 2$, then $\pi_x \cap \pi_y = \langle x, y \rangle$. If $d(x, y) = 4$, then we may assume that $d(x, \ell_1) = 3$ and $d(x, \ell_2) = 5$. Hence there is a point $z \in \ell_1$ such that $d(x, z) = 2$. Since $\ell_2 \not\subset x^\perp$, it follows that $\pi_y \cap x^\perp = \ell_1$ and therefore $\pi_x \cap \pi_y = \{z\}$. If $d(x, y) = 6$, then $d(x, \ell_i) = 5$, $i = 1,2$, and hence there are four points $z_1, z_2, z_3, z_4$ of $\Gamma$ such that $z_i \in \ell_i$, with $d(x, z_i) = 4$, $i = 1,2$, and $d(x, z_3) = d(z_1, z_3) = d(x, z_4) = d(z_2, z_4) = 2$. The points $x, z_4, z_2, y, z_1, z_3$ are the vertices of an apartment of $\Gamma$. Since $\langle x, z_4, z_2, y, z_1, z_3 \rangle \simeq \PG(5, q)$, it follows that $|\pi_x \cap \pi_y| = 0$.
\end{proof}

\section{The construction}\label{sec_main}
Assume $q\equiv 1 \pmod{3}$ and take the notation introduced in the previous section. Set
\begin{align*}
& \cO = \bigcup_{x \in \Gamma} \pi_x .
\end{align*}
By Lemma \ref{lem_cap} if $x, y \in \Gamma$, $x \ne y$, and $|\pi_x \cap \pi_y| \ne 0$, then $\pi_x \cap \pi_y \subset \Gamma$. Hence $|\cO| = (q^2-q)(q+1)(q^2+q+1)+(q+1)(q^2+q+1) = (q^2+q+1)(q^3+1)$. This shows that $\cO$ has the correct size of a $(q^2+q+1)$-ovoid of $\Q^+(7,q)$ and we will show it is indeed a $(q^2+q+1)$-ovoid.

We first describe the orbits of the group $G$ on the point set of $\Q^+(7,q)$.

Let $\w$ be a primitive element of $\F_q$, i.e., $\F_q^*=\<\w\>$.

\begin{lemma}\label{lem_orb}
The group $G$ has $5$ orbits on points of $\cQ^+(7, q)$. 
\begin{enumerate}
\item The orbit $\cO_1$ consists of the $(q+1)(q^2+q+1)$ points of $\Gamma$; a representative for $\cO_1$ is $P_1 = U_3$ and $U_3^\perp$ contains exactly $2(q+1)$ lines of $\Gamma$.
\item The orbit $\cO_2$ consists of the $(q^3-q)(q^2+q+1)$ points of $\cO \setminus \Gamma$; a representative for $\cO_2$ is $P_2 = U_2+U_6$ and $P_2^\perp$ contains exactly $2$ lines of $\Gamma$.
\item The orbit $\cO_3$ has size $q^3(q+1)(q^2+q+1)/3$; a representative for $\cO_3$ is $P_3 = U_1 + \w^{\frac{q-1}{3}} U_5 + \w^{\frac{2(q-1)}{3}} U_9$ and $P_3^\perp$ contains exactly $6$ lines of $\Gamma$. 
\item The orbit $\cO_i$, $i = 4, 5$, has $q^3(q^2-1)(q-1)/3$ points; a representative for $\cO_4$ is $P_4 = U_2 + U_6 + \w U_7$ and for $\cO_5$ is $P_5 = U_3 + \w U_4 + \w U_8$; $P_i^\perp$, $i = 4, 5$, contains no line of $\Gamma$. 
\end{enumerate}
\end{lemma}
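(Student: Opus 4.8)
The plan is to transport the whole problem into the language of $3\times 3$ matrices and then read off the orbits from rational canonical forms. By \eqref{map} the points of $\cQ(8,q)$ correspond, via $\rho$, to nonzero matrices $M\in\cM_{3,3}(q)$ taken up to scalars, and $G$ acts by conjugation $M\mapsto AMA^{-1}$. A direct check shows that the defining form of $\cQ(8,q)$ equals, up to sign, $c_2(M)$, the sum of the principal $2\times 2$ minors of $M$ (equivalently the coefficient of $\lambda$ in the characteristic polynomial of $M$), while $\Pi$ is the trace-zero hyperplane. Hence a point of $\cQ^+(7,q)$ is the class $[M]$ of a nonzero matrix with $\tr M=0$ and $c_2(M)=0$; equivalently its characteristic polynomial is $\lambda^3-d$ with $d=\det M$. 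Since conjugation preserves the rational canonical form and $d$, while scaling $M\mapsto cM$ replaces $d$ by $c^3 d$, the first task is to classify such matrices up to conjugation and scaling.

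I would organize the classification by $d=\det M$. If $d=0$ the matrix is nilpotent and, being $3\times 3$ of trace zero, splits into exactly two conjugacy types by rank: the rank-one matrices (equivalently $M^2=0$), which are precisely the rank-one trace-zero matrices $ab^\top$ with $a\cdot b=0$ forming $\Gamma$, giving $\cO_1$; and the regular nilpotent ones (rank two, a single Jordan block), giving $\cO_2$ with representative $\rho^{-1}(P_2)$. If $d\ne 0$, then since $q\equiv 1\pmod 3$ the field contains a primitive cube root of unity, so $\lambda^3-d$ splits into distinct linear factors when $d$ is a cube and is irreducible otherwise; in either case $M$ is regular semisimple. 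Because scaling acts on $d$ by cubes, the class of $d$ in $\F_q^*/(\F_q^*)^3$ is a projective invariant: the cube class yields $\cO_3$, while the two non-cube classes $\w\cdot(\F_q^*)^3$ and $\w^2\cdot(\F_q^*)^3$ yield $\cO_4$ and $\cO_5$. Conversely, within a fixed coset any two of these matrices can be rescaled to share their characteristic polynomial and are then conjugate, so each coset is a single orbit. I would then match $P_3,P_4,P_5$ by computing their characteristic polynomials, namely $\lambda^3-1$, $\lambda^3-\w$ and $\lambda^3-\w^2$.

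The cardinalities follow from orbit–stabilizer in $\GL(3,q)$, dividing by $q-1$ for the projectivization. The centralizers needed are: the unipotent centralizers of a rank-one and of a regular nilpotent for $\cO_1,\cO_2$; the split torus of order $(q-1)^3$ for $\cO_3$, together with the order-$3$ cyclic stabilizer produced by the scalars $1,\zeta,\zeta^2$ permuting the eigenvalues, which accounts for the factor $1/3$; and the nonsplit torus of order $q^3-1$ for $\cO_4,\cO_5$, where one ranges over the $(q-1)/3$ admissible determinants in each non-cube coset and again picks up a factor $1/3$. These reproduce the stated sizes, and since every trace-zero matrix with $c_2=0$ has characteristic polynomial $\lambda^3-d$ and so falls into one of the listed cases, the classification is exhaustive, confirming there are exactly five orbits.

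It remains to count the lines of $\Gamma$ in each $P_i^\perp$. Here I would first identify the polar form: the bilinear form associated with $c_2$ is $B(M,M')=\tr M\,\tr M'-\tr(MM')$, hence on $\Pi$ it is simply $-\tr(MM')$. The lines of $\Gamma$ are the intersections with $\Pi$ of the two rulings of $\cS_{2,2}$, i.e. the families $L_a=\{[ab^\top]:a\cdot b=0\}$ (fix $a$) and $R_b=\{[ab^\top]:a\cdot b=0\}$ (fix $b$), indexed by $[a],[b]\in\PG(2,q)$. Since $\tr(M\,ab^\top)=b^\top M a$, one gets $L_a\subset[M]^\perp$ if and only if $Ma\in\langle a\rangle$, that is $a$ is an eigenvector of $M$, and dually $R_b\subset[M]^\perp$ if and only if $b$ is an eigenvector of $M^\top$. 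Thus the number of lines of $\Gamma$ in $P_i^\perp$ is the number of $\F_q$-eigendirections of $M$ plus that of $M^\top$; evaluating on the representatives gives $(q+1)+(q+1)=2(q+1)$ for $\cO_1$ (kernel a plane), $1+1=2$ for $\cO_2$, $3+3=6$ for $\cO_3$ (three distinct eigenvalues in $\F_q$), and $0+0=0$ for $\cO_4,\cO_5$ (irreducible characteristic polynomial). The main obstacle is the orbit-separation step: showing that the two non-cube determinant cosets are genuinely not fused under conjugation together with scaling — this is exactly where the hypothesis $q\equiv 1\pmod 3$ is indispensable, whereas everything else reduces to canonical-form bookkeeping or the direct eigenvector computation above.
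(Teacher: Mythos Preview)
Your proof is correct and takes a genuinely different route from the paper's.

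The paper proceeds representative by representative: for each $P_i$ it solves the matrix equation $A\rho^{-1}(P_i)A^{-1}=\lambda\rho^{-1}(P_i)$ explicitly in the entries of $A$ to determine the projective stabilizer in $\GL(3,q)$ (invoking Lemma~\ref{xyz} for the $\cO_4$ case), and then handles the line counts geometrically. For $P_1$ it quotes Lemma~\ref{l_0}; for $P_2$ it argues via Lemma~\ref{l_1} that a third line of $\Gamma$ in $P_2^\perp$ would force a solid of $\cQ^+(7,q)$ through $\pi_{U_3}$; for $P_3$ it exhibits an explicit apartment in the $\cQ^+(5,q)$ given by $X_1=X_5=X_9=0$ and rules out further lines by a plane--solid argument; for $P_4,P_5$ it computes that $\langle P_4,P_5\rangle^\perp$ is disjoint from $\Gamma$, again using Lemma~\ref{xyz}.

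Your argument replaces all of this by two structural observations. First, since every $M$ on $\cQ^+(7,q)$ has characteristic polynomial $\lambda^3-\det M$, conjugacy classes are determined by $\det M$ together with the rank in the nilpotent case, and scaling acts on $\det M$ by cubes; this gives the five orbits conceptually and makes the separation of $\cO_4$ and $\cO_5$ immediate (your closing caveat notwithstanding: the cube-class invariant you already wrote down settles it). Second, identifying the polar form on $\Pi$ with $-\tr(MM')$ yields the clean criterion that a line $L_a$ (resp.\ $R_b$) of $\Gamma$ lies in $[M]^\perp$ if and only if $a$ (resp.\ $b$) is an eigenvector of $M$ (resp.\ $M^\top$), reducing the line counts to reading off the $\F_q$-eigendirections of the representatives. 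This bypasses Lemmas~\ref{l_0}, \ref{l_1} and \ref{xyz} entirely and gives a uniform treatment of all five cases. The trade-off is that the paper's argument ties the orbit structure directly to the hexagon geometry that is reused in the proof of Theorem~\ref{main}, whereas yours is self-contained linear algebra.
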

\begin{proof}
We shall find it helpful to work with the elements of $\PGL(3,q)$ as matrices in $\GL(3,q)$ and the points of $\PG(7,q)$ as $3\times 3$ matrices with trace zero.

\begin{enumerate}
\item Let $A\in \GL(3,q)$ and assume that $A\rho^{-1}(U_3)A^{-1}=\lambda \rho^{-1}(U_3)$ for some $\lambda\in\F_{q}^*$. This can be written explicitly as 
\[\left(\begin{matrix}
0&0& a_{11} \\
0 &0& a_{21} \\
0 &0& a_{31} 
\end{matrix}\right)=\lambda\left(\begin{matrix}
a_{31}& a_{32} &a_{33}\\
0&0&0\\
0&0&0
\end{matrix}\right),\]
which implies that $a_{11}=\lambda a_{33}$ and $a_{21}=a_{31}=a_{32}=0$. Since $A\in \GL(3,q)$, then $a_{22}a_{33}\neq 0$. It follows that the stabilizer of $P_1$ in $\GL(3,q)$ has size $q^3(q-1)^3$ and so
\[|\orb_1|=\frac{|\GL(3,q)|}{q^3(q-1)^3}=(q+1)(q^2+q+1).\]
The fact that $U_3^\perp$ contains $2(q+1)$ lines of $\Gamma$ follows from Lemma~\ref{l_0}. 
\item Let $A\in \GL(3,q)$ with $A\rho^{-1}(P_2)A^{-1}=\lambda \rho^{-1}(P_2)$ for some $\lambda\in\F_q^*$. Then $a_{21}=a_{31}=a_{32}=0$, $a_{12}=\lambda a_{23}$, $a_{11}=\lambda a_{22}$, $a_{22}=\lambda a_{33}$, and $a_{33}\neq 0$. Thus
\[|\orb_2|=\frac{|\GL(3,q)|}{q^2(q-1)^2}=q(q^3-1)(q+1).\]
The point $P_2$ belongs to $\pi_{U_3}$ and hence $P_2^\perp$ contains the two lines $\ell_1, \ell_2$ of $\Gamma$ through $U_3$. Let $\ell$ be a line of $\Gamma$. If $d(U_3, \ell) = 3$ and $\ell \subset P_2^\perp$, then $|\ell \cap \ell_1| = 1$ and $\langle \ell, \ell_1 \rangle$, $\langle P_2, \ell \rangle$, $\langle \ell_1, \ell_2 \rangle$ are three planes of $\cQ^+(7, q)$ spanning a solid and pairwise intersecting in a line. It follows that the solid is contained in $\cQ^+(7, q)$. Therefore $\ell \subset \pi_{U_3}^\perp$, contradicting Lemma~\ref{l_1}. If $d(U_3, \ell) = 5$ and $\ell \subset P_2^\perp$, then there is a line $\ell'$ incident with $\ell$ and with $\ell_1$. Hence $d(U_3, \ell') = 3$ and $\ell' \subset P_2^\perp$. Thus, as above, $\ell' \subset \pi_{U_3}^\perp$, contradicting Lemma~\ref{l_1}. 

\item Let $A\in \GL(3,q)$ with $A\rho^{-1}(P_3)A^{-1}=\lambda\rho^{-1}(P_3)$ for some $\lambda\in\F_q^*$. If $\lambda=1$, then $a_{12}=a_{13}=a_{21}=a_{23}=a_{31}=a_{32}=0$ and $a_{11}a_{22}a_{33}\neq 0$. If $\lambda=\w^{\frac{q-1}{3}}$, then $a_{11}=a_{13}=a_{21}=a_{22}=a_{32}=a_{33}=0$ and $a_{12}a_{23}a_{31}\neq 0$. If $\lambda=\w^{\frac{2(q-1)}{3}}$ then $a_{11}=a_{12}=a_{22}=a_{23}=a_{31}=a_{33}=0$ and $a_{13}a_{21}a_{32}\neq 0$. If $\lambda\notin\{1,\w^{\frac{q-1}{3}},\w^{\frac{2(q-1)}{3}}\}$, then $A=0$ which is not in $\GL(3,q)$. Therefore,
\[|\orb_3|=\frac{|\GL(3,q)|}{3(q-1)^3}=\frac{q^3(q+1)(q^2+q+1)}{3}.\]

The hyperplane $P_3^\perp$ of $\Pi$ contains the $5$-dimensional projective space $X_1 = X_5 = X_9 = 0$ which meets $\cQ^+(7, q)$ in a $\cQ^+(5, q)$ and intersects $\Gamma$ precisely in the apartment $\cA$ given by the $6$ lines $\langle U_2, U_3 \rangle, \langle U_3, U_6 \rangle, \langle U_6, U_4 \rangle, \langle U_4, U_7 \rangle, \langle U_7, U_8 \rangle, \langle U_8, U_2 \rangle$. A line $s$ of $\Gamma$ disjoint from $\cA$ cannot be contained in $P_3^\perp$, otherwise $s \cap \cQ^+(5, q)$ would be a point of $\Gamma$ not contained in $\cA$. Similarly, a line $s$ of $\Gamma$ incident with a line $r$ of $\cA$ cannot be contained in $P_3^\perp$, otherwise through $r$ there would be three planes of $\cQ^+(7, q)$ and necessarily two of these three planes would span a solid of $\cQ^+(7, q)$. In this case it follows that there exists a line $r'$ of $\cA$ incident with $r$ such that $s \subset \langle r, r' \rangle^\perp$, contradicting Lemma~\ref{l_1}. 
\item Assume $A\in \GL(3,q)$ with $A\rho^{-1}(P_4)A^{-1}=\lambda \rho^{-1}(P_4)$ for some $\lambda\in\F_{q}^*$. Then $\lambda^3=1$, $a_{11}=\lambda^2 a_{33}$, $a_{22}=\lambda a_{33}$, $a_{12}=\lambda a_{23}$, $a_{31}=\lambda^2 \w a_{23}$, $a_{21}=\lambda^2 \w  a_{13}$, and $a_{32}=\lambda \w  a_{13}$. With these conditions, we have
\[\det(A)=a_{33}^3+\w a_{23}^3+\w^2a_{13}^3-3\w a_{13}a_{23}a_{33}.\] Since $\det(A)\neq 0$,  by Lemma \ref{xyz}, the only restriction on $\{a_{13}, a_{23}, a_{33}\}$ is $(a_{13},a_{23},a_{33})\neq (0,0,0)$. Therefore,
\[|\orb_{4}|=\frac{|\GL(3,q)|}{3(q^3-1)}=\frac{q^3(q^2-1)(q-1)}{3}.\]
The computation for the orbit of $P_5$ is similar and we omit it. To see that $P_4$ and $P_5$ are not in the same orbit, assume $A\in\GL(3,q)$ such that $A\rho^{-1}(P_4)=\lambda \rho^{-1}(P_5)A$ for some $\lambda\in\F_q^*$. Then $\lambda^3\w=1$ which is impossible. 

We conclude that $G$ has five orbits as described above since $|\orb_1|+\cdots+|\orb_5|=(q^3+1)\frac{q^4-1}{q-1}$ which equals the number of points of $\Q^+(7,q)$.

The line $u$ joining $P_4$ and $P_5$ is secant to $\cQ^+(7, q)$ and $u^\perp$ is the $5$-dimensional projective space of $\Pi$ given by $\w X_3 + X_4 + X_8 = \w X_2 + \w X_6 + X_7 = 0$. Furthermore, $u^\perp$ is disjoint from $\Gamma$. Indeed, the point given in \eqref{point} belongs to $u^\perp$ if and only if 
\begin{align}
& a_1b_1 + a_2b_2 + a_3b_3 = 0 \nonumber \\
& \w a_1b_2 + \w a_2b_3 + a_3b_1 = 0 \label{sys} \\
& \w a_1b_3 + a_2b_1 + a_3b_2 = 0. \nonumber
\end{align}
The system \eqref{sys} has no non-trivial solutions since 
\begin{align*}
& \det \begin{pmatrix}
b_1 & b_2 & b_3 \\
\w b_2 & \w b_3 & b_1 \\
\w b_3 & b_1 & b_2 \\
\end{pmatrix} = 3 \w b_1 b_2 b_3 - b_1^3 - \w b_2^3 - \w^2 b_3^3 = 0  ,
\end{align*}
which implies that $(b_1, b_2, b_3) = (0,0,0)$ by Lemma \ref{xyz}. It follows that no line of $\Gamma$ is contained in $P_4^\perp$ or in $P_5^\perp$.
\end{enumerate}
\end{proof}

\begin{remark}\label{remark}
\textup{By Lemma~\ref{lem_orb} and \eqref{map}, the set $\cO$ consists precisely of the points of $\Pi$ corresponding to $3\times 3$ matrices over $\F_q$ having rank at most $2$. }
\end{remark}

We are now ready to prove the main theorem.
\begin{theorem}\label{main}
The set $\cO$ is a $(q^2+q+1)$-ovoid of $\cQ^+(7, q)$.
\end{theorem}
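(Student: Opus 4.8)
The plan is to verify Definition~\ref{def_movoid} directly for $m=q^2+q+1$, so that $\theta_{r-1}=\theta_3=q^2+1$ and the two prescribed values of $|P^\perp\cap\cO|$ are $m\theta_3-\theta_3+1=q^4+q^3+q^2+q+1$ (for $P\in\cO$) and $m\theta_3=q^4+q^3+2q^2+q+1$ (for $P\notin\cO$). Since $G\cong\PGL(3,q)$ fixes $\cS_{2,2}$, $\cQ^+(7,q)$, hence $\Gamma$, it permutes the planes $\pi_x$ and fixes $\cO$, and it also preserves the polarity $\perp$; therefore $|P^\perp\cap\cO|$ is constant on each of the five orbits of Lemma~\ref{lem_orb}. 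It suffices to evaluate it at $P_1,\dots,P_5$, and by Remark~\ref{remark} we have $P_1,P_2\in\cO$ while $P_3,P_4,P_5\notin\cO$.

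The first step is a double count. By Lemma~\ref{lem_cap} two distinct planes can meet only in a line or a point of $\Gamma$; hence every point $z$ of $\Gamma$ lies in exactly $2q+1$ planes $\pi_x$ (those indexed by $z$ and by the $2q$ points collinear with $z$ in $\Gamma$), whereas every point of $\cO\setminus\Gamma$ lies in a unique plane. Counting incidences between $P^\perp\cap\cO$ and the planes yields
\[
\sum_{x\in\Gamma}|\pi_x\cap P^\perp|=|P^\perp\cap\cO|+2q\,|P^\perp\cap\Gamma| .
\]
Each $\pi_x$ is a plane, so it is contained in the hyperplane $P^\perp$ or meets it in a line; writing $a_P$ for the number of planes with $\pi_x\subseteq P^\perp$ and recalling that there are $(q+1)(q^2+q+1)$ planes, the left-hand side equals $a_Pq^2+(q+1)^2(q^2+q+1)$. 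Everything thus reduces to computing $a_P$ and $|P^\perp\cap\Gamma|$ for each representative.

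For $|P^\perp\cap\Gamma|$ I would use that, in the flag-geometry model of $\Gamma$, the lines of ``point type'' (those arising from points of $\PG(2,q)$) partition the point set of $\Gamma$ into $q^2+q+1$ lines. Each such line is either contained in $P^\perp$ or meets it in one point, so if $k$ is the number of point-type lines inside $P^\perp$ then $|P^\perp\cap\Gamma|=q^2+q+1+kq$. Lemma~\ref{lem_orb} already records the total number of lines of $\Gamma$ in $P^\perp$, namely $2(q+1),2,6,0,0$; in each case these split evenly into point type and line type (through a flag the two lines are of opposite type; for the first orbit one gets the full pencil and the full range, $q+1$ of each; for the third orbit the apartment is a triangle of $\PG(2,q)$ whose six lines alternate the two types), giving $k=q+1,1,3,0,0$. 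A plane $\pi_x$ lies in $P^\perp$ exactly when both lines of $\Gamma$ through $x$ do, so $a_P$ counts the flags incident with one point-type and one line-type line of $\Gamma$ contained in $P^\perp$; reading the incidences off the explicit configurations of Lemma~\ref{lem_orb} gives $a_P=2q+1,1,6,0,0$.

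Substituting, the formula collapses to
\[
|P^\perp\cap\cO|=\big(q^4+q^3+2q^2+q+1\big)+(a_P-2k)\,q^2 ,
\]
where $a_P-2k=-1,-1,0,0,0$ across the five orbits. Hence $|P^\perp\cap\cO|=q^4+q^3+q^2+q+1$ on $\cO_1\cup\cO_2=\cO$ and $q^4+q^3+2q^2+q+1$ on $\cO_3\cup\cO_4\cup\cO_5$, which is precisely the defining condition of a $(q^2+q+1)$-ovoid. I expect the main obstacle to be the bookkeeping of the previous paragraph: justifying the even point-type/line-type split of the lines of $\Gamma$ in $P^\perp$, and pinning down $|P^\perp\cap\Gamma|$ tightly enough to exclude unexpected extra points of $\Gamma$ lying in $P^\perp$. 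The partition of $\Gamma$ into point-type lines is what makes this last point clean, since it reduces the count of $\Gamma$-points in the hyperplane to counting how many of those lines it fully contains.
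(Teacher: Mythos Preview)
Your argument is correct and yields the same five intersection numbers as the paper, but the organisation is genuinely different. The paper's proof treats each orbit representative $P_i$ separately: it first computes $|P_i^\perp\cap\Gamma|$ and then, by classifying the planes $\pi_x$ according to the distance $d(x,\cdot)$ in $\Gamma$, counts $|P_i^\perp\cap(\cO\setminus\Gamma)|$ case by case. Your route instead sets up a single double-counting identity
\[
\sum_{x\in\Gamma}|\pi_x\cap P^\perp|=|P^\perp\cap\cO|+2q\,|P^\perp\cap\Gamma|,
\]
uses that every $\pi_x$ meets the hyperplane $P^\perp$ in a plane or a line to evaluate the left side as $a_Pq^2+(q+1)^2(q^2+q+1)$, and then reduces everything to the two integers $a_P$ and $k$ via the partition of the points of $\Gamma$ into the $q^2+q+1$ point-type lines. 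What this buys you is a uniform closed formula $|P^\perp\cap\cO|=(q^2+1)(q^2+q+1)+(a_P-2k)q^2$, so that the verification boils down to checking $a_P-2k\in\{-1,0\}$ on the five orbits; the paper's case analysis is more hands-on but avoids having to argue the even point-type/line-type split of the $\Gamma$-lines in $P^\perp$. Both proofs draw on exactly the same input from Lemma~\ref{lem_orb} (the number of lines of $\Gamma$ contained in each $P_i^\perp$), so the difference is in packaging rather than in depth.
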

\begin{proof}
We show that $|P_i^\perp \cap \cO|$ equals $q^4+q^3+q^2+q+1$ if $i = 1,2$ or $(q^2+1)(q^2+q+1)$ if $i = 3,4,5$. 

The hyperplane $P_1^\perp$ of $\Pi$ contains $2(q+1)$ lines of $\Gamma$ and hence 
\begin{align*}
& |P_1^\perp \cap \Gamma| = 2q^2+2q+1. 
\end{align*}
Let $x$ be a point of $\Gamma$. If $x = P_1$ or $d(x, P_1) = 2$, then $\pi_{x} \subset P_1^\perp$. If $d(x, P_1) = 4$, then $\pi_x \cap P_1^\perp$ is a line of $\Gamma$ contained in $P_1^\perp$, whereas if $d(x, P_1) = 6$, then $\pi_x \cap P_1^\perp$ is a line with two points of $\Gamma$. Hence 
\begin{align*}
& |P_1^\perp \cap (\cO \setminus \Gamma)| = (2q+1) \times (q^2-q) + q^3 \times (q-1) = q^4 + q^3 - q^2 - q
\end{align*}
and therefore $|P_1^\perp \cap \cO| = q^4+q^3+q^2+q+1$.

Assume that the hyperplane $P_2^\perp$ of $\Pi$ contains the two lines of $\Gamma$ contained in $\pi_y$. It follows that 
\begin{align*}
& |P_2^\perp \cap \Gamma| = (q+1)^2. 
\end{align*}
Let $x$ be a point of $\Gamma$. If $x = y$, then $\pi_{x} \subset P_2^\perp$. If $d(x, y) = 2$, then $\pi_x \cap P_2^\perp$ is a line of $\Gamma$ contained in $P_2^\perp$. If $d(x, y) \ge 4$, then either $x \in P_2^\perp$ and $\pi_x \cap P_2^\perp$ is a line with one point of $\Gamma$, or $x \notin P_2^\perp$ and $\pi_x \cap P_2^\perp$ is a line with two points of $\Gamma$. Hence 
\begin{align*}
& |P_2^\perp \cap (\cO \setminus \Gamma)| = (q^2-q) + q^2 \times q + (q^3 + q^2)  \times (q-1) = q^4 + q^3 - q
\end{align*}
and therefore $|P_2^\perp \cap \cO| = q^4+q^3+q^2+q+1$.

If the hyperplane $P_3^\perp$ of $\Pi$ contains the apartment $\cA$ of $\Gamma$, then 
\begin{align*}
& |P_3^\perp \cap \Gamma| = (q-1)^2 + 6q = q^2+4q+1.
\end{align*}
Indeed there are exactly $2(q-1)^2$ lines of $\Gamma$ intersecting $P_3^\perp$ in a point not belonging to $\cA$.
Let $x$ be a point of $\Gamma$. If $x \in \cA$, then either $x$ is a vertex of $\cA$ and $\pi_{x} \subset P_3^\perp$ or $x$ is not a vertex of $\cA$ and $\pi_x \cap P_3^\perp$ is a line of $\cA$ that hence is contained in $P_3^\perp$. If $x \notin \cA$, then either $x \in P_3^\perp$ and $\pi_x \cap P_3^\perp$ is a line with one point of $\Gamma$, or $x \notin P_3^\perp$ and $\pi_x \cap P_3^\perp$ is a line with two points of $\Gamma$. Hence 
\begin{align*}
& |P_3^\perp \cap (\cO \setminus \Gamma)| = 6 \times (q^2-q) + (q-1)^2 \times q + (q^3 + q^2 - 2q)  \times (q-1) = q^4 + q^3 + q^2 - 3q
\end{align*}
and therefore $|P_3^\perp \cap \cO| = (q^2+1)(q^2+q+1)$.

If $i \in \{4, 5\}$, then no line of $\Gamma$ is contained in the hyperplane $P_i^\perp$ of $\Pi$. Hence 
\begin{align*}
& |P_i^\perp \cap \Gamma| = q^2+q+1.
\end{align*}
Let $x$ be a point of $\Gamma$. In this case either $x \in P_i^\perp$ and $\pi_x \cap P_i^\perp$ is a line with one point of $\Gamma$, or $x \notin P_i^\perp$ and $\pi_x \cap P_i^\perp$ is a line with two points of $\Gamma$. Hence 
\begin{align*}
& |P_i^\perp \cap (\cO \setminus \Gamma)| = (q^2+q+1) \times q + (q^3 + q^2 + q) \times (q-1) = q^4 + q^3 + q^2
\end{align*}
and therefore $|P_i^\perp \cap \cO| = (q^2+1)(q^2+q+1)$.
\end{proof}

\section{Concluding remarks}\label{concluding}
\subsection{Other known examples of $m$-ovoids of $\Q^+(7,q)$}

To the best of our knowledge, there are further known $m$-ovoids of $\cQ^+(7, q)$ not mentioned in Section \ref{sec1}.

\begin{itemize}
\item $m$-ovoids, $m \in \{q^2+q+1, q+1\}$, of $\cQ^+(7, q)$ arising from \cite[Construction 5.1]{CPS}: let $\cC$ be a cone of a $\cQ(6, q) \subset \cQ^+(7, q)$ such that $\cC \subset \cQ(6, q)$ and $\cC$ has either as vertex a point of $\cQ(6, q)$ and base a $\cQ(4, q)$ or as vertex a line and base a $\cQ(2, q)$. Let $\cC'$ be a cone having as vertex the same vertex of $\cC$ and as base either a $\cQ(4, q)$ or a $\cQ(2, q)$ and such that $\cC' \subset (\cQ^+(7, q) \setminus \cQ(6, q))$. Then $(\cQ(6, q) \setminus \cC) \cup \cC'$ is a $(q^2+q+1)$-ovoid of $\cQ^+(7, q)$. A line of $\cQ^+(7, q)$ meets $(\cQ(6, q) \setminus \cC) \cup \cC'$ in $0, 1, 2, q$ or $q+1$ points. By replacing $\cQ(6, q)$ with $\cQ^-(5, q)$, a similar construction yields $(q+1)$-ovoids of $\cQ^+(7, q)$.   

\item $m$-ovoids of $\cQ^+(7, q)$, $1 \le m \le q+1$, obtained by applying a group of order $q+1$ to an ovoid of $\cQ^+(7, q)$: it can be deduced from \cite{D}, that there is a group of order $q+1$, say $S$, fixing $\cQ^+(7, q)$ and a set $\cL$ consisting of $(q^2+1)(q^3+1)$ pairwise disjoint lines of $\cQ^+(7, q)$. In particular $\cL$ is a line-spread of $\cQ^+(7, q)$ and $\cS$ stabilizes each of the members of $\cL$ permuting in a single orbit the $q+1$ points of each of the lines of $\cL$. Therefore, if $\cO$ is an ovoid of $\cQ^+(7, q)$ then it turns out that $\cO^S$ is a set of $q+1$ pairwise disjoint ovoids of $\cQ^+(7, q)$. 
\end{itemize}

\subsection{The isomorphism issue}
We have constructed a $(q^2+q+1)$-ovoid $\cO$ in $\Q^+(7,q)$ for $q\equiv 1\pmod{3}$. As mentioned in Section \ref{sec1}, a non-degenerate hyperplane intersects $\Q^+(7,q)$ in a parabolic quadric $\Q(6,q)$ which is also a $(q^2+q+1)$-ovoid of $\Q^+(7,q)$. One may ask whether $\M$ is contained in a non-degenerate hyperplane. We explain below that the answer is no.

\begin{proposition}\label{prop_int}
There are lines of $\cQ^+(7, q)$ intersecting $\cO$ in exactly three points.
\end{proposition}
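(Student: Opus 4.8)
The plan is to combine the matrix description of $\cO$ from Remark~\ref{remark} with the observation that the quadratic form defining $\cQ^+(7,q)$ is, up to sign, the second elementary symmetric function of a matrix. Writing a point of $\Pi$ as $\langle M\rangle$ with $M = \rho^{-1}(X)$ a trace-zero $3\times 3$ matrix, one checks that the form $X_2X_4 - X_1X_5 + X_3X_7 - X_1X_9 + X_6X_8 - X_5X_9$ equals $-c_2(M)$, where $c_2(M)$ denotes the sum of the principal $2\times 2$ minors of $M$. Hence $\langle M\rangle$ lies on $\cQ^+(7,q)$ exactly when $\tr(M)=0$ and $c_2(M)=0$, in which case the characteristic polynomial of $M$ is simply $t^3-\det(M)$. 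By Remark~\ref{remark}, among the points of $\cQ^+(7,q)$ a point $\langle M\rangle$ lies in $\cO$ if and only if $\rank(M)\le 2$, i.e. if and only if $\det(M)=0$.

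Next I would reduce the statement to counting the zeros of a determinant along a totally singular line. A line of $\cQ^+(7,q)$ is a pencil $\{\langle \lambda M_0 + \mu M_1\rangle : (\lambda:\mu)\in\PG(1,q)\}$ all of whose matrices are trace-zero with vanishing $c_2$; for such a line the number of its points lying in $\cO$ equals the number of $(\lambda:\mu)\in\PG(1,q)$ for which the binary cubic $D(\lambda,\mu):=\det(\lambda M_0+\mu M_1)$ vanishes. Thus it suffices to exhibit one totally singular line for which $D$ is not identically zero and has exactly three distinct roots in $\PG(1,q)$.

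The key step is to produce such a line explicitly. Let $N(a,b,c)$ be the matrix with $a,b,c$ in positions $(1,2),(2,3),(3,1)$ and zeros elsewhere. Each $N(a,b,c)$ has zero diagonal and all principal $2\times 2$ minors equal to zero, so $\tr=c_2=0$ and it represents a point of $\cQ^+(7,q)$; moreover $\det N(a,b,c)=abc$. Since $N$ is linear in $(a,b,c)$, every matrix in the pencil $\lambda N(a_0,b_0,c_0)+\mu N(a_1,b_1,c_1) = N(\lambda a_0+\mu a_1,\ \lambda b_0+\mu b_1,\ \lambda c_0+\mu c_1)$ again lies on $\cQ^+(7,q)$, so the line it spans is automatically totally singular, and its determinant factors as
\[
D(\lambda,\mu)=(\lambda a_0+\mu a_1)(\lambda b_0+\mu b_1)(\lambda c_0+\mu c_1),
\]
a product of three linear forms. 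Taking, for instance, $M_0=N(1,0,1)$ and $M_1=N(0,1,1)$ gives $D(\lambda,\mu)=\lambda\mu(\lambda+\mu)$, whose three roots $(0:1),(1:0),(1:-1)$ are distinct for every $q\ge 3$. The corresponding three points have rank at most $2$ and hence lie in $\cO$, while the remaining $q-2$ points of the line have nonzero determinant, hence rank $3$, and lie outside $\cO$. Therefore this totally singular line meets $\cO$ in exactly three points.

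The only genuine difficulty is conceptual rather than computational: one must recognize the identification of the defining form with $-c_2(M)$ and the resulting equivalence ``a point of a totally singular line lies in $\cO$ if and only if the associated matrix has zero determinant.'' Once the cyclic family $N(a,b,c)$ is in hand the determinant factors with no computation, so checking three distinct roots — and hence the claim — is immediate; the only care needed is that the chosen roots stay distinct in small characteristic, which already holds for $q\ge 3$ and in particular for every $q\equiv 1\pmod 3$.
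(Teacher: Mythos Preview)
Your proof is correct and, despite the additional algebraic scaffolding (the identification of the form with $-c_2(M)$ and the pencil-determinant viewpoint), it is essentially the paper's own argument: your matrices $N(a,b,c)$ are precisely the points $aU_2+bU_6+cU_7$ of the plane $\sigma=\langle U_2,U_6,U_7\rangle$ used in the paper, and your determinant condition $abc=0$ recovers exactly the paper's observation that $\sigma\cap\cO$ is the triangle with vertices $U_2,U_6,U_7$. Your specific line through $M_0=U_2+U_7$ and $M_1=U_6+U_7$ is one of the lines of $\sigma$ avoiding the three vertices, so both proofs pick out the same example.
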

\begin{proof}
Consider the plane $\sigma$ spanned by $U_2$, $U_6$, $U_7$. Then $\sigma$ is a plane of $\cQ^+(7, q)$ and by Remark~\ref{remark}, $\sigma \cap \cO$ consists of the $3q$ points of the three lines joining the three points $U_2$, $U_6$, $U_7$. Hence a line of $\sigma$ not containing $U_2$, $U_6$, $U_7$ meets $\cO$ in three points. 
\end{proof}

Note that $U_1-U_2+U_4-U_5, U_6-U_5+U_9-U_8, U_2, U_3, U_4, U_6, U_7, U_8\in\cO$ and they span $\PG(7,q)$. It follows that $\cO$ is not contained in a hyperplane. Furthermore, we see from Proposition \ref{prop_int} that $\cO$ is not isomorphic to $(\cQ(6, q) \setminus \cC) \cup \cC'$ defined in the previous subsection. Therefore the $(q^2+q+1)$-ovoid $\cO$ is not equivalent to any known $m$-ovoids of $\Q^+(7,q)$.

\subsection{$m$-ovoids in $\Q(8,q)$}
Consider the parabolic quadric $\Q(8,q)$ given by
\[X_2 X_4 - X_1 X_5 + X_3 X_7 - X_1 X_9 + X_6 X_8 - X_5 X_9=0.\]
%The points of $\Q(8,q)$ correspond to $3\times 3$ matrices over $\F_q$ through the map $\rho$ defined in \eqref{map}. 
There is a group isomorphic to $\PGL(3,q)$ fixing $\Q(8,q)$ as mentioned in Section \ref{sec_pre}. 
With the aid of Magma \cite{Magma}, we found $(q^2+q+1)$-ovoids of $\Q(8,q)$ for $q=2,3,4,5$. When $q=2$, the $(q^2+q+1)$-ovoid found by the computer is actually an embedded $\Q^-(7,q)$. This is not interesting. However, for $q=3,4,5$, a Magma computation shows that there exists a $(q^2+q+1)$-ovoid not contained in a hyperplane and we conjecture that this is true for all larger $q$. 
\begin{conjecture}
There exists a $(q^2+q+1)$-ovoid of $\Q(8,q)$ which is not contained in a hyperplane for any prime power $q>2$.
\end{conjecture}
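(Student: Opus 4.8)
The plan is to produce the desired ovoid as a $\PGL(3,q)$-invariant point set, exactly as in the $\cQ^+(7,q)$ construction, and to verify the ovoid property through the two-intersection-number characterization of $m$-ovoids rather than generator by generator. The starting point is a matrix description of the quadric itself. Writing a point as $\rho(M)$ for $M\in\cM_{3,3}(q)$, a direct computation identifies the quadratic form, up to sign, with the second coefficient $c_2(M)$ of the characteristic polynomial (the sum of the principal $2\times 2$ minors): the point $\rho(M)$ lies on $\Q(8,q)$ precisely when $c_2(M)=0$. Polarizing, the associated bilinear form becomes
\begin{equation*}
B(M,N)=\tr(MN)-\tr(M)\tr(N),
\end{equation*}
so that $\rho(M)^\perp$ consists of those $\rho(N)$ with $\tr(MN)=\tr(M)\tr(N)$. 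Since both $c_2$ and this perp relation are invariant under the conjugation action $M\mapsto AMA^{-1}$, the group $G\cong\PGL(3,q)$ of Section~\ref{sec_pre} acts on $\Q(8,q)$ with orbits given by the projective conjugacy classes of matrices with vanishing $c_2$.

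For any $G$-invariant set $\cM$ the quantity $|P^\perp\cap\cM|$ is constant on each $G$-orbit of $P$. By the theory of intriguing sets \cite{BKLP07}, $\cM$ is an $m$-ovoid of $\Q(8,q)$ exactly when this quantity takes only two values, one on the points of $\cM$ and another on the points outside, with the difference fixed by the rank and order of $\Q(8,q)$ (cf.\ Definition~\ref{def_movoid}). The problem therefore reduces to three finite tasks: (i) list the $G$-orbits on $\Q(8,q)$; (ii) compute, for each ordered pair of orbit representatives $M,M'$, the collinearity number $|\{\rho(N'):N'\sim M',\ \tr(MN')=\tr(M)\tr(N')\}|$, where $\sim$ denotes conjugacy; and (iii) exhibit a union of orbits for which the resulting intersection numbers collapse to two values with $m=q^2+q+1$.

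For (i) I would classify the orbits by rational canonical form of $M$ subject to $c_2(M)=0$, taken up to scalar. The number of orbit \emph{types} is bounded independently of $q$, while the individual sizes and the way a type splits depend on the arithmetic of $q$ (whether the relevant cubic or quadratic factors split, and the value of $q\bmod 3$), exactly the phenomena already governed by Lemma~\ref{xyz} and Lemma~\ref{lem_orb}; indeed the five orbits of Lemma~\ref{lem_orb} reappear as the trace-zero classes. Task (ii) is then a counting problem: fixing the canonical form of $M$, the condition $\tr(MN')=\tr(M)\tr(N')$ is a single affine constraint on $N'$, and its intersection with each conjugacy class can be evaluated directly or by elementary character sums, giving the inter-orbit collinearity matrix as an explicit polynomial in $q$ on each arithmetic branch. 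For (iii) the Magma data for $q=3,4,5$ single out the correct union of orbits; one then checks that the same selection turns the two-intersection-number condition into an identity in $q$ on every branch. As an orientation (and consistency check) when $q\equiv 1\pmod 3$, the $G$-fixed point $\rho(\mathrm{id})$ is non-singular, with $\rho(\mathrm{id})^\perp\cap\Q(8,q)=\cQ^+(7,q)$, so a natural guess is that $\cM$ restricts on this hyperplane to the ovoid $\cO$ of Theorem~\ref{main}. Finally, non-containment in a hyperplane follows as in Section~\ref{concluding} by writing down nine points of $\cM$ spanning $\PG(8,q)$, and the exclusion $q>2$ is explained by the degeneration of the candidate to an embedded $\Q^-(7,2)$, which does lie in a hyperplane.

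The hard part will be step (iii): proving that a \emph{single} orbit union yields exactly two intersection numbers for \emph{all} $q>2$. Because the collinearity numbers carry the cube and square conditions of Lemma~\ref{xyz}, the verification splits along several arithmetic branches, and the two target values must coincide across every branch simultaneously. The uniform vanishing of the ``third'' and higher intersection numbers — transparent in the computations for $q=3,4,5$ — is precisely the delicate cancellation that resists a case-free argument, and is the reason the statement is posed as a conjecture rather than a theorem.
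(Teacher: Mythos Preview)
The statement you are attempting is labelled a \emph{conjecture} in the paper: the authors give no proof, only Magma verifications for $q\in\{3,4,5\}$ together with the observation that the $q=2$ candidate degenerates to an embedded $\cQ^-(7,2)$. There is therefore no proof in the paper to compare your proposal against.

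Your proposal is not a proof either, and you say as much. The setup is sound --- the identification of the quadratic form with $-c_2(M)$, the polarized form $\tr(MN)-\tr(M)\tr(N)$, the $\PGL(3,q)$-action by conjugation, and the orbit classification via rational canonical forms are all correct and natural --- but step~(iii) is the entire content of the conjecture, and you explicitly defer it. Listing the orbits and writing down the inter-orbit collinearity matrix is bookkeeping; what is genuinely open is exhibiting a fixed union of orbit types whose intersection numbers collapse to exactly two values for \emph{every} $q>2$, uniformly across the arithmetic branches ($q\bmod 3$, parity, splitting of the relevant cubics). You have located the difficulty precisely but not resolved it, so what you have written is a research plan, not a proof. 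One further point you do not raise: in the $\cQ^+(7,q)$ case the paper succeeds because $\cO$ has a geometric description as $\bigcup_{x\in\Gamma}\pi_x$, and the hexagon combinatorics of Lemmas~\ref{l_0}--\ref{lem_cap} drive the count in Theorem~\ref{main}. For $\cQ(8,q)$ no analogous incidence-geometric description of the candidate set is available, so your step~(iii) would have to proceed by a purely algebraic identity in $q$ on each branch --- which is exactly why the authors leave it as a conjecture.
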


\bibliographystyle{plain}

\end{document}